%
%

\documentclass[12pt,a4paper,oneside]{amsart}
\usepackage[a4paper]{geometry}
\geometry{hmargin=1.8cm,top=2cm,bottom=2.5cm}

\usepackage{txfonts}
\DeclareMathAlphabet{\mathcal}{OMS}{cmsy}{m}{n} 

\DeclareSymbolFont{bbold}{U}{bbold}{m}{n}
\DeclareSymbolFontAlphabet{\mathbbold}{bbold}

\usepackage{amssymb,amscd,enumerate,mathrsfs,perpage,silence,stmaryrd}

\MakePerPage[2]{footnote}

\newcommand{\vertbar}{\>|\>}
\newcommand{\set}[2]{\ensuremath{\{ #1 \vertbar #2 \}}}
\def\liebrack  {\ensuremath{[\,\cdot\, , \cdot\,]}}

\DeclareMathOperator{\B}{B}
\DeclareMathOperator{\C}{C}
\DeclareMathOperator{\Coker}{Coker}
\DeclareMathOperator{\dcobound}{d}
\DeclareMathOperator{\GF}{\mathsf{GF}}
\DeclareMathOperator{\Hom}{Hom}
\DeclareMathOperator{\Homol}{H}
\DeclareMathOperator{\HL}{HL}
\DeclareMathOperator{\im}{Im}
\DeclareMathOperator{\Ker}{Ker}
\DeclareMathOperator{\Sl}{\mathsf{sl}}
\DeclareMathOperator{\sym}{S}
\DeclareMathOperator{\Z}{Z}

\hyphenation{Hei-sen-berg}

\newtheorem{proposition}{Proposition}
\newtheorem*{corollary}{Corollary}
\newtheorem*{theorem}{Theorem}

\numberwithin{equation}{section}

\begin{document}

\title{Commutative Lie algebras and commutative cohomology in characteristic $2$}

\author{Viktor Lopatkin}
\address[Viktor Lopatkin]{
Laboratory of Modern Algebra and Applications, 
St. Petersburg State University and 
St. Petersburg Department of Steklov Mathematical Institute, 
St. Petersburg, Russia}
\email{wickktor@gmail.com}

\author{Pasha Zusmanovich}
\address[Pasha Zusmanovich]{University of Ostrava, Ostrava, Czech Republic}
\email{pasha.zusmanovich@osu.cz}

\date{July 6, 2019 17:46 CEST}

\thanks{
The research of the first author was supported by the grant of the Government of
the Russian Federation for the state support of scientific research carried out
under the supervision of leading scientists, agreement 14.W03.31.0030 dated 
15.02.2018.
}

\begin{abstract}
We discuss a version of the Chevalley--Eilenberg cohomology in characteristic 
$2$, where the alternating cochains are replaced by symmetric ones.
\end{abstract}

\maketitle

\section*{Introduction}

Define a commutative Lie algebra as a commutative algebra satisfying the
Jacobi identity. While in characteristic $\ne 2$ this definition gives rise to
a very special class of locally nilpotent Jordan algebras (studied in the
literature under the names ``mock-Lie'' and ``Jacobi--Jordan'', see
\cite{mock-lie} and references therein), in characteristic $2$ the picture is
entirely different: this class of algebras lies between ordinary Lie algebras 
(where commutativity is replaced by a stronger alternating property) and Leibniz
algebras (where commutativity is dropped altogether), both inclusions are 
strict. The class of commutative Lie algebras admits a good cohomology theory: 
the cohomology is defined via the standard formula for the differential in the 
Chevalley--Eilenberg complex, with the alternating cochains being replaced by symmetric ones.

Why bother with such curiosity? We give four arguments, roughly in increasing
degree of persuasiveness.

\begin{enumerate}[1)]
\item
From the operadic viewpoint, a ``natural'' class of algebras should be defined
by multilinear identities. Moreover, the class of commutative Lie algebras 
appears naturally in certain algebraic topological and categorical contexts.

\item
The underlying complex based on symmetric cochains, unlike the usual one based
on alternating cochains, does not necessary vanish in degrees larger than the
dimension of the algebra. This situation is similar to those occurring in 
cohomology of Lie superalgebras or Leibniz algebras (in any characteristic), 
opens new possibilities, and poses new interesting questions.

\item
Commutative cohomology provides a new invariant of ordinary Lie algebras.

\item
Commutative cohomology of ordinary Lie algebras appears naturally in some
problems related to classification of simple Lie algebras.
\end{enumerate}

The present note is elucidation of points 2--4 (concerning point 1, see an
interesting recent preprint \cite{etingof} for an operadic context, 
\cite{levine} for an algebraic topological context, and \cite{garcia-martinez}
for a categorical context). While elementary in nature, this elucidation 
captures, in our opinion, some important phenomena peculiar to characteristic
$2$ which will be important in the ongoing classification of simple Lie algebras
in that characteristic.

Before we plunge into our considerations, a few remarks are in order.

\WarningsOff
\begin{enumerate}[$\bullet$]
\WarningsOn
\item
Commutative $2$-cocycles of Lie algebras in arbitrary characteristic
do appear naturally in some circumstances and were considered in \cite{alia-d},
\cite{alia-db}, and \cite{comm2}, but, unlike in characteristic $2$, they
seemingly do not lead to any cohomology theory.

\item
For abelian (i.e., with trivial multiplication) Lie algebras, commutative cohomology may be defined in any
characteristic. An instance of such second-degree cohomology appears in
\cite[\S 5]{low} in the context of calculating structure functions on manifolds
of loops with values in compact hermitian symmetric spaces. It seems to be
worthy to study this cohomology and associated structures further. (A 
more-than-decade-ago promise from \cite{low} to develop a 
``symmetric analogue of Spencer cohomology related with symmetric analogue of 
Cartan prolongations and some Jordan algebras'' remained, so far, unfulfilled).

\item
The phenomenon of appearance of not necessary alternating $2$-cocycles in
characteristic $2$ was noted already in \cite[\S 3.4]{jurman}.

\item
Another interesting (and more sophisticated) versions of cohomology theory of
Lie (super){\allowbreak}al\-geb\-ras attempting to fix deficiencies of the
ordinary cohomology in characteristic $2$ were suggested in 
\cite[\S 3]{leites-div-power}. These versions are based on cochain complex 
defined on the divided powers instead of (super)alternating polynomials, with 
various values of the shearing parameters for each (co)homology theory. It seems
to be interesting to combine the constructions of this note and of 
\cite{leites-div-power}.

\item
Everything here can be dualized to get commutative \emph{homology}. This is left
as an exercise to the reader. We are interested primarily in cohomology, due to
its application in structure theory, as explained in \S \ref{sec-motiv} below.

\end{enumerate}

\section{Definitions}

\subsection{Commutative Lie algebras}
Throughout this note, the ground field $K$ is assumed to be of characteristic
$2$, unless stated otherwise. A \emph{commutative Lie algebra} is an algebra $L$
over $K$ with multiplication $\liebrack$ satisfying the commutative identity
$$
[x,y] = [y,x]
$$
and the Jacobi identity
$$
[[x,y],z] + [[z,x],y] + [[y,z],x] = 0
$$
for any $x,y,z \in L$. The usual Lie-algebraic notions of abelian algebra, 
simple algebra, center, ideal, quotient, derivations, deformations, module 
(including the notions of a trivial, adjoint, and dual module), are carried over commutative
Lie algebras without any modification. When considered as an $L$-module, $K$ is
always understood as a trivial module.

\subsection{A note about terminology}
As noted in the introduction, in characteristic different from $2$, commutative
Lie algebras appeared in the literature under different names, see
\cite{mock-lie} and references therein. Neither of these names (``mock-Lie'',
``Jacobi-Jordan'', ``Jordan algebras of nilindex $3$'', etc.) adequately
reflects the characteristic $2$ situation.

Algebras satisfying the anticommutative identity
$$
[x,y] = -[y,x]
$$
and the Jacobi identity, appeared in \cite{levine}, \cite{garcia-martinez}, and
references therein under the name ``quasi-Lie algebras''. Quasi-Lie algebras in
characteristic $2$ are commutative Lie algebras in our terminology, and
ordinary Lie algebras in all other characteristics.

\subsection{Relation to Lie and Leibniz algebras}
As commutative Lie algebras form a subclass of Leibniz algebras, the
relationships between the classes of commutative and ordinary Lie algebras
follow the already established patterns. The Jacobi identity implies that in any
commutative Lie algebra $L$, the squares $[x,x]$, where $x \in L$, linearly span
the central ideal of $L$, denoted by $L^{sq}$ (cf. \cite[\S 1.10]{loday-pira}, 
where in the case of Leibniz algebras this ideal is denoted by $L^{ann}$). More
generally, $L^{sq}$ acts trivially on any $L$-module $M$. The quotient 
$L/L^{sq}$ is a Lie algebra, and one may study commutative Lie algebras by 
considering corresponding extensions of Lie algebras, like it is done, for example, in \cite{dzhu-abd}.

In particular, in any simple commutative Lie algebra $L$ this ideal vanishes,
and hence $L$ is a Lie algebra. Following \cite{dzhu-abd}, one may consider the
next possible minimal situation concerning ideals: a commutative Lie algebra
will be called \emph{almost simple} if each its proper ideal coincides with
$L^{sq}$. For any almost simple commutative Lie algebra $L$, the quotient
$L/L^{sq}$ is simple. Note that, unlike in Leibniz setting, $L^{sq}$ is central,
so, in the case $L$ is not Lie, $L^{sq}$ is necessarily one-dimensional.

\subsection{Commutative cohomology}
Let $L$ be a commutative Lie algebra and $M$ an $L$-module, with a module action
defined by $\bullet$. A \emph{commutative cohomology} of $L$ with coefficients
in $M$, denoted by $\Homol_{comm}^{\bullet}(L,M)$, is defined as cohomology of 
the cochain complex
$$
0 \to \sym^0(L,M) \overset{\dcobound}{\to} \sym^1(L,M) \overset{\dcobound}{\to}
      \sym^2(L,M) \overset{\dcobound}{\to} \dots
$$
where $\sym^n(L,M)$ is the space of $n$-linear symmetric maps 
$f: \underbrace{L \times \dots \times L}_{n \text{ times}} \to M$, i.e. 
$n$-linear maps satisfying
$$
f(x_{\sigma(1)}, \dots, x_{\sigma(n)}) = f(x_1, \dots, x_n)
$$
for any permutation $\sigma \in S_n$. The differential is defined as
\begin{equation*}
\dcobound \varphi(x_1, \dots, x_{n+1}) =
\sum_{1 \le i < j \le n+1}
\varphi([x_i,x_j],x_1,\dots,\widehat{x_i},\dots,\widehat{x_j},\dots,x_{n+1})
+
\sum_{i=1}^{n+1} x_i \bullet \varphi(x_1,\dots,\widehat{x_i},\dots,x_{n+1}) .
\end{equation*}

Note that this is the usual formula for differential in the Chevalley--Eilenberg
complex of a Lie algebra in characteristic $2$ (i.e., all the signs being
dropped). The cocycles and coboundaries in this complex will be customary 
denoted by $\Z^\bullet_{comm}(L,M)$ and $\B^\bullet_{comm}(L,M)$, respectively.

\subsection{``De quadratum nihilo exaequari''\protect\footnote{
From Henri Cartan laudatio on then occasion of receiving Doctor Honoris Causa 
from the Oxford University.
}}
The equality $\dcobound^2 = 0$ may be established by applying verbatim the same
standard arguments used in the case of the usual Chevalley--Eilenberg
cohomology. Namely, for each $x\in L$ let $i(x)$ be endomorphism of the vector
space $\sym^\bullet(L,M) = \bigoplus_{n \ge 0}\sym^n(L,K)$ which maps 
$\sym^n(L,M)$ to $\sym^{n-1}(L,M)$ by the formula
$$
(i(x)f)(x_1, \dots, x_n) = f(x,x_1, \dots, x_n) ,
$$
and let $\theta$ be the natural representation of $L$ in $\sym^n(L,M)$. Then,
for any $x,y \in L$, the usual Cartan formulas hold:
\begin{align}
\theta(x) i(y) + i(y) \theta(x) &= i([x,y])   \notag             \\
i(x) \dcobound + \dcobound i(x) &= \theta(x)  \label{eq-cartan}  \\
\theta(x) \dcobound &= \dcobound \theta(x)    \notag
\end{align}
from what the desired equality $\dcobound^2 = 0$ follows.

Here is a nice heuristic explanation why this works, due to Alexei Lebedev. In
the proof of the equality $\dcobound^2 = 0$ in the Lie-algebraic (i.e.,
alternating) case, we would need an alternating property, and not merely a 
commutativity, of the Lie algebra bracket, only in the case where the formula 
for $\dcobound^2$ would involve expressions of the form $[u,u]$, where $u$ is 
some expression involving $x_1, \dots, x_{n+1}$. Similarly, the alternating, and
not merely symmetric, property of cochains $\varphi$'s would be required only in
the case where $\dcobound^2$ would involve expressions of the form $\varphi(u,u,\dots)$.
Neither of these is the case, and hence the commutativity of the Lie bracket,
and the symmetricity of cochains is enough.

\subsection{No derived functor?}\label{ss-u}
The similarity with the Chevalley--Eilenberg cohomology, however, does have its
limits: it is interesting to see where the standard proof that the
Chevalley--Eilenberg cohomology is the derived functor of the functor of taking
the module invariants $M \mapsto M^L$ (cf., e.g. \cite[\S 7.7]{weibel}), fails
in the case of commutative cohomology.

First we should find a suitable replacement of the universal enveloping algebra
in the commutative case. As $L^{sq}$ acts trivially on any module, the usual
universal enveloping algebra $U(L/L^{sq})$ should serve the purpose: the
categories of representations of $L$ and of $U(L/L^{sq})$ are the same. Define
the chain complex
\begin{equation}\label{eq-res}
                        \dots \overset{\delta}{\to}
U(L/L^{sq}) \otimes \bigvee^3(L) \overset{\delta}{\to}
U(L/L^{sq}) \otimes \bigvee^2(L) \overset{\delta}{\to}
U(L/L^{sq}) \otimes L         \overset{\delta}{\to}
U(L/L^{sq})                   \overset{\varepsilon}{\to} K \to 0
\end{equation}
where $\bigvee^n(L)$ is the $n$-fold symmetric product of $L$, and $\varepsilon$
is the augmentation map with kernel $U^+(L/L^{sq})$. The differential is defined
exactly by the same formula as in the Lie-algebraic (alternating) case:
\begin{multline*}
\delta\Big(u \otimes (x_1 \vee \dots \vee x_n)\Big)
\\ =
\sum_{1 \le i < j \le n}
u \otimes ([x_i,x_j] \vee x_1 \vee \dots \vee \widehat{x_i} \vee \dots \vee
                                              \widehat{x_j} \vee \dots \vee x_n)
+
\sum_{i=1}^n
ux_i \otimes (x_1 \vee \dots \vee \widehat{x_i} \vee \dots \vee x_n) ,
\end{multline*}
where $u \in U(L/L^{sq})$, and $x_1,\dots,x_n \in L$.

By the same arguments as in the Lie-algebraic case -- involving a version of
Cartan formulas (\ref{eq-cartan}) for the complex (\ref{eq-res}) -- we have
$\delta^2 = 0$. However, the complex (\ref{eq-res}) is not exact, so, unlike in
the Lie-algebraic case, it is not a free resolution of the trivial module $K$.
It is not exact already in the case of abelian $L$ (what, in the Lie-algebraic
case, constitute the Koszul complex and essentially serves as an $E_0$ page of 
the spectral sequence abutting to the homology in the general case): for 
example, the chain $1 \otimes (x \vee x)$, for nonzero $x\in L$, belongs to 
$\Ker\delta$, but not to $\im\delta$, since the latter in the second degree lies
in $U^+(L/L^{sq}) \otimes \sym^2(L)$.

Replacing in the complex (\ref{eq-res}) the symmetric product by the 
``alternating'' one, i.e., by the quotient of the tensor algebra $T^\bullet(L)$
by the ideal generated by elements of the form $x \vee x$, $x \in L$, will not work
either: in characteristic $2$, this ``alternating'' product is isomorphic to the
exterior one, $\bigwedge^\bullet(L)$, and for the finite-dimensional $L$, the so 
obtained complex is finite, while the symmetric cohomology apriori may not 
vanish in an arbitrarily large degree (and it does not vanish indeed in all 
examples computed below).

\subsection{Motivation}\label{sec-motiv}
We have encountered commutative cohomology when started a project of description
of simple finite-dimensional Lie algebras having a Cartan subalgebra of toral
rank $1$, of which \cite{grishkov-zus} is the beginning. In the process, one
need to compute various low-degree cohomology of current Lie algebras, i.e. Lie
algebras of the form $L \otimes A$ where $L$ is a Lie algebra and $A$ is a 
commutative associative algebra, for certain particular instances of $L$ and 
$A$. When one tries to extend the known formulas for such cohomology in 
characteristics $\ne 2,3$ from \cite{low} to the case of characteristic $2$, one
naturally encounters low-degree commutative cohomology of $L$. In 
\cite{grishkov-zus}, where we dealt with the case where $L$ is the 
$3$-dimensional simple algebra, commutative cohomology appear in disguise in
Proposition 2.1. The results of this note will be used in subsequent 
classification efforts of simple Lie algebras in characteristic $2$.

\section{Elementary observations}

\subsection{Cohomology of low degree}

The usual interpretations of low-degree cohomology are trivially carried over
from Lie (and Leibniz) algebras to the commutative Lie case:
$\Homol^0_{comm}(L,M) = M^L$, the module of invariants,
$\Homol^1_{comm}(L,K) \simeq (L/[L,L])^*$,
$\Homol^1_{comm}(L,L)$ coincides with outer derivations of $L$,
$\Homol^2_{comm}(L,M)$ describes equivalent classes of abelian extensions
$$0 \to M \to \cdot \to L \to 0 ,$$
$\Homol^2_{comm}(L,L)$ describes infinitesimal deformations of a commutative
Lie algebra $L$, whereas obstructions to prolongability of infinitesimal
deformations to global ones live in $\Homol^3_{comm}(L,L)$.

In particular, the problem of description of almost simple commutative Lie
algebras reduces to determination of $1$-dimensional central extensions
$0 \to Q^{sq} \to Q \to L \to 0$, and hence to computation of
$\Homol^2_{comm}(L,K)$ of all simple Lie algebras $L$.

For any Lie algebra $L$ defined over a field of characteristic $\ne 2$, there is
a useful exact sequence
\begin{equation}\label{eq-seq-no2}
0 \to \Homol^2(L,K) \to \Homol^1(L,L^*) \to \B(L) \to \Homol^3(L,K)
\end{equation}
which goes back to classical works of Koszul and Hochschild--Serre
(see, for example, \cite[\S 1]{comm2} and references therein). Here
$\Homol^\bullet(L,M)$ is the usual Chevalley--Eilenberg cohomology with 
coefficients in an $L$-module $M$, and $\B(L)$ is the space of symmetric invariant bilinear 
forms on $L$, i.e. symmetric bilinear maps $\varphi: L \times L \to K$ such that
\begin{equation}\label{eq-invar}
\varphi([x,y],z) = \varphi([z,x],y)
\end{equation}
for any $x,y,z \in L$.
In characteristic $2$, however, (\ref{eq-seq-no2}) is no longer true, but we
have instead

\begin{proposition}
For any commutative Lie algebra $L$, there is a short exact sequence
$$
0 \to \Homol_{comm}^2(L,K) \to \Homol_{comm}^1(L,L^*) \to \B_{alt}(L) \to
\Homol_{comm}^3(L,K) .
$$
\end{proposition}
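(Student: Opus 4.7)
The plan is to construct the three arrows $\mu,\nu,\lambda$ explicitly, mirroring the classical characteristic $\ne 2$ construction, and verify exactness at each interior position by direct calculation. In our setting the differentials carry no signs, and ``alternating'' is systematically replaced by ``symmetric'' on the cochain side.

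For $\mu\colon \Homol^2_{comm}(L,K)\to \Homol^1_{comm}(L,L^*)$, I would send a symmetric 2-cocycle $\omega$ to the 1-cochain $f_\omega(x)(y)=\omega(x,y)$; with the coadjoint action $(x\bullet\xi)(y)=\xi([x,y])$ on $L^*$, the 2-cocycle identity on $\omega$ is literally the 1-cocycle identity on $f_\omega$, and coboundaries match on both sides, giving an injection. For $\nu\colon \Homol^1_{comm}(L,L^*)\to \B_{alt}(L)$, I would send a 1-cocycle $f$ to $\psi_f(x,y)=f(x)(y)+f(y)(x)$; this is automatically alternating in characteristic $2$, and a short manipulation shows that the 1-cocycle identity on $f$ together with commutativity of $L$ forces invariance of $\psi_f$. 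Exactness at $\Homol^1_{comm}(L,L^*)$ is then essentially formal: $\psi_f=0$ means the bilinear form attached to $f$ is symmetric, so it lies in $\sym^2(L,K)$ and the 1-cocycle identity for $f$ reads off as the 2-cocycle identity there.

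For the last arrow $\lambda\colon \B_{alt}(L)\to \Homol^3_{comm}(L,K)$, I would use an obstruction construction. Given $\psi\in\B_{alt}(L)$, pick any bilinear form $\varphi_0$ with $\varphi_0+\varphi_0^T=\psi$ (splittings exist since $\psi$ vanishes on the diagonal) and set
\[
\rho_\psi(x,y,z) \;=\; \varphi_0([x,y],z) + \varphi_0(y,[x,z]) + \varphi_0(x,[y,z]).
\]
Under the identification $\sym^1(L,L^*)=\{\text{bilinear forms on }L\}$, this is literally the differential of the 1-cochain $f(x)(y)=\varphi_0(x,y)$ in $\sym^\bullet(L,L^*)$, so $d^2=0$ there, evaluated on one further argument and reorganized using the full symmetry of $\rho_\psi$ established below, shows that $\rho_\psi$ is a 3-cocycle in $\sym^\bullet(L,K)$. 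Modifying $\varphi_0$ by $\beta\in\sym^2(L,K)$ adds $d\beta$ to $\rho_\psi$, so $\lambda(\psi):=[\rho_\psi]$ is well-defined; exactness at $\B_{alt}(L)$ is then built into the construction, since $\lambda(\psi)=0$ iff $\rho_\psi=d\beta$ for some symmetric $\beta$, iff $\varphi_0+\beta$ is a 1-cocycle mapping to $\psi$ under $\nu$. The vanishing $\lambda\circ\nu=0$ is immediate: when $\varphi_0$ is itself a 1-cocycle, $\rho_{\varphi_0}=df=0$.

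The principal technical hurdle will be the symmetry of $\rho_\psi$ in all three arguments. Symmetry in $(x,y)$ is free from $[x,y]=[y,x]$; for symmetry in $(y,z)$, I would expand $\rho_\psi(x,y,z)-\rho_\psi(x,z,y)$, rewrite the asymmetric $\varphi_0$ terms via $\varphi_0^T=\psi+\varphi_0$, and collapse what remains to $\psi([x,y],z)+\psi([x,z],y)$. The invariance and symmetry of $\psi$, combined with commutativity of the bracket, force these two scalars to coincide, so the difference is $2\psi([x,y],z)=0$. The same identities are what render $\lambda\circ\nu=0$ transparent and what drive the reorganization step showing $\rho_\psi$ is a cocycle of $\sym^\bullet(L,K)$ rather than merely of $\sym^\bullet(L,L^*)$.
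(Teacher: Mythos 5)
Your proposal is correct and follows exactly the route the paper intends: the paper's proof is just a pointer to the standard Koszul/Hochschild--Serre-type argument (as in the proof of Proposition~1.1 of \cite{comm2}), and your maps $\omega\mapsto\omega(x,\cdot)$, $f\mapsto f(x)(y)+f(y)(x)$, and the obstruction class $[\rho_\psi]$ built from a splitting $\varphi_0$ of $\psi$ are precisely that argument transplanted to symmetric cochains. All the verifications (full symmetry of $\rho_\psi$ via invariance of $\psi$ and commutativity of the bracket, independence of the choice of $\varphi_0$ up to $\dcobound\sym^2(L,K)$, and exactness at the two interior spots) check out.
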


Here $\B_{alt}(L)$ denotes the space of all \emph{alternating} bilinear maps
satisfying (\ref{eq-invar}).

\begin{proof}
The proof repeats the standard arguments used in establishing the exact sequence
(\ref{eq-seq-no2}) or its commutative analog in characteristic $\ne 2$
(see, for example, \cite[Proof of Proposition 1.1]{comm2}).
\end{proof}

\subsection{Relation to Chevalley--Eilenberg and Leibniz cohomology}

The natural inclusion of alternating maps to symmetric ones induces, for any
Lie algebra $L$, $L$-module $M$, and $n\in \mathbb N$, a commutative diagram
$$
\begin{CD}
\C^n(L,M)   @>\dcobound>> \C^{n+1}(L,M)      \\
@VVV              @VVV               \\
\sym^n(L,M) @>\dcobound>> \sym^{n+1}(L,M)
\end{CD}
$$
where $\C^n(L,M)$ is the usual space of alternating cochains, and $\dcobound$ is
the usual Chevalley-Eilenberg differential. This, in its turn, induces the map
\begin{equation}\label{eq-inc-lie}
\Homol^n(L,M) \to \Homol^n_{comm}(L,M) .
\end{equation}

Similarly, the natural inclusion of symmetric maps to all multilinear maps
induces, for any commutative Lie algebra $L$ and an $L$-module $M$, a
commutative diagram
$$
\begin{CD}
\sym^n(L,M)           @>\dcobound>> \sym^{n+1}(L,M)           \\
@VVV                        @VVV                    \\
\Hom_K(L^{\otimes n},M) @>\dcobound>> \Hom_K(L^{\otimes n+1},M)
\end{CD}
$$
Here $\dcobound$ in the bottom row denotes the differential in the Leibniz 
complex. This, in its turn, induces the map
\begin{equation}\label{eq-inc-comm}
\Homol^n_{comm}(L,M) \to \HL^n(L,M) ,
\end{equation}
where $\HL^\bullet(L,M)$ denotes the Leibniz cohomology.

Obviously, for $n=0,1$ the maps (\ref{eq-inc-lie}) and (\ref{eq-inc-comm})
are isomorphisms (there is nothing to ``symmetrize'' or ``alternate'' for
cochains in $0$ or $1$ arguments). For any Lie algebra $L$, any $2$-coboundary
with arbitrary coefficients
\begin{equation}\label{eq-2-cobound}
\dcobound \varphi(x,y) =
\varphi([x,y]) + x \bullet \varphi(y) + y \bullet \varphi(x) ,
\end{equation}
and any $3$-coboundary with trivial coefficients
$$
\dcobound \varphi(x,y,z) = 
\varphi([x,y],z) + \varphi([z,x],y) + \varphi([y,z],x)
$$
is alternating, and hence the map (\ref{eq-inc-lie}) is an embedding for $n=2$,
and for $n=3$ and $M=K$. Similarly, for any commutative Lie algebra $L$, the 
Leibniz $2$-coboundary is given by the same formula (\ref{eq-2-cobound}), and 
hence the map (\ref{eq-inc-comm}) is an embedding for $n=2$. In general, 
however, neither of the maps (\ref{eq-inc-lie}) and (\ref{eq-inc-comm}) is an 
embedding or a surjection.

\subsection{Extension of the base field}

The standard arguments based on the universal coefficient theorem, the same as
in the case of ordinary Che\-val\-ley--Ei\-len\-berg cohomology, imply that the
commutative cohomology does not change under field extension: if $L$ is a 
commutative Lie algebra over a field $K$, and $K \subset K^\prime$ is a field extension, then
$$
\Homol^n_{comm}(L \otimes_K K^\prime, M \otimes_K K^\prime) \simeq
\Homol^n_{comm}(L,M) \otimes_K K^\prime .
$$

\section{The cup product}\label{sec-cup}

For a commutative Lie algebra $L$ over a field $K$, define the bilinear map
\[
 \smile : \sym^\bullet(L,K) \times \sym^\bullet(L,K) \to \sym^\bullet(L,K)
\]
by the formula
\begin{equation}\label{cupproduct}
(\varphi \smile \psi)(x_1,\ldots, x_{p+q}) = 
\sum_{IJ} \varphi(x_{i_1}, \ldots, x_{i_p})\cdot \psi(x_{j_1},\ldots,x_{j_q}) ,
\end{equation}
where the sum is taken over all shuffles, i.e. partitions of the sequence 
$\{1,\ldots, p+q\}$ into two disjoint increasing subsequences 
$I = \{i_1,\ldots, i_p\}$ and $J = \{j_1,\ldots, j_q\}$.

It is obvious that the so defined $\smile$ turns $\sym^\bullet(L,K)$ into a 
(graded) associative ring.

\begin{proposition}
The differential $\dcobound$ is a derivation of the ring $\sym^\bullet(L,K)$ 
with respect to the product $\smile$.
\end{proposition}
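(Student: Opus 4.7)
The plan is to verify the Leibniz rule $\dcobound(\varphi \smile \psi) = \dcobound\varphi \smile \psi + \varphi \smile \dcobound\psi$ for $\varphi \in \sym^p(L,K)$ and $\psi \in \sym^q(L,K)$ by direct computation, matching terms on both sides. Since the coefficient module $K$ is trivial, the ``module-action'' summands $x_i \bullet (-)$ in the formula for $\dcobound$ vanish, so the differential reduces to the pure bracket sum
$$
\dcobound\chi(x_1, \ldots, x_{n+1}) = \sum_{1 \le i < j \le n+1} \chi\bigl([x_i, x_j], x_1, \ldots, \widehat{x_i}, \ldots, \widehat{x_j}, \ldots, x_{n+1}\bigr) .
$$
This is the principal simplification; what remains is bookkeeping of shuffles.

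Expanding the left-hand side with the help of (\ref{cupproduct}), each summand is indexed by a pair $i < j$ in $\{1, \ldots, p+q+1\}$ together with a shuffle of the $(p+q)$-element sequence $([x_i, x_j], x_1, \ldots, \widehat{x_i}, \ldots, \widehat{x_j}, \ldots, x_{p+q+1})$ into a $p$-subsequence feeding $\varphi$ and a $q$-subsequence feeding $\psi$. Because the bracket $[x_i, x_j]$ occupies the first slot of this sequence, it must appear at the head of whichever factor's argument list it joins, which splits the total sum into two disjoint cases: \emph{(a)} the bracket feeds $\varphi$, and \emph{(b)} the bracket feeds $\psi$.

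Expanding the right-hand side similarly, each summand of $\dcobound\varphi \smile \psi$ is parametrized by a shuffle $(I, J)$ of $\{1, \ldots, p+q+1\}$ with $|I| = p+1$, $|J| = q$, together with an ordered pair $i < j$ in $I$. The data for case \emph{(a)} on the left matches this data on the right by taking $I$ to be the bracket positions $\{i, j\}$ together with the pull-back of the indices contributing to $\varphi$, and $J$ unchanged; the arguments of $\varphi$ and $\psi$ then coincide in both expressions, with $\varphi$ beginning with $[x_i, x_j]$ and continuing with $(x_k)_{k \in I \setminus \{i, j\}}$ in increasing order, while $\psi$ receives $(x_k)_{k \in J}$ in increasing order. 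An entirely analogous bijection identifies case \emph{(b)} with the terms of $\varphi \smile \dcobound\psi$.

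The main potential obstacle --- the sign bookkeeping that in the analogous Chevalley--Eilenberg computation turns the identity into the graded Leibniz rule $\dcobound(\varphi \smile \psi) = \dcobound\varphi \smile \psi + (-1)^p \varphi \smile \dcobound\psi$ --- evaporates in characteristic $2$. Consequently, once the indexing bijection above is in place, the identity drops out at once, in harmony with the ``heuristic explanation'' given earlier: one never needs the alternating (as opposed to symmetric) property of $\varphi, \psi$, nor the skew (as opposed to commutative) property of the bracket, because no term of the form $\chi(u, u, \ldots)$ or $[u, u]$ is produced in the process.
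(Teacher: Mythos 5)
Your proof is correct and takes essentially the same route as the paper's: both expand $\dcobound(\varphi \smile \psi)$, $\dcobound\varphi \smile \psi$ and $\varphi \smile \dcobound\psi$ directly (the module-action terms being absent since $K$ is trivial) and match the bracket terms according to whether $[x_i,x_j]$ is fed to $\varphi$ or to $\psi$. The only difference is presentational --- you describe the bijection of indexing data (pair plus shuffle versus shuffle plus pair), whereas the paper writes out the resulting double sums explicitly.
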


\begin{proof}
We need to prove that for any $\varphi \in \sym^p(L,K)$ and 
$\psi \in \sym^q(L,K)$, it holds that
\begin{equation}\label{eq-d}
\dcobound (\varphi \smile \psi) = 
\dcobound \varphi \smile \psi + \varphi \smile \dcobound \psi .
\end{equation}

This is verified by direct computation: we have
\begin{eqnarray*}
(\dcobound \varphi \smile \psi)(x_1,\ldots, x_{p+q}) &=& \sum_{IJ} (\dcobound\varphi)(x_{i_1}, \ldots, x_{i_p}) \cdot \psi(x_{j_1},\ldots,x_{j_q}) \\
&=& \sum_{IJ} \sum_{1 \le r < s \le p}
\varphi([x_{i_r},x_{i_s}],x_{i_1},\dots,\widehat{x_{i_r}},\dots,\widehat{x_{i_s}},\dots,x_{i_p}) \cdot \psi(x_{j_1},\ldots,x_{j_q}),
\end{eqnarray*}
\begin{eqnarray*}
(\varphi \smile \dcobound \psi)(x_1,\ldots, x_{p+q}) &=& \sum_{IJ} \varphi(x_{i_1}, \ldots, x_{i_p}) \cdot (\dcobound\psi)(x_{j_1},\ldots,x_{j_q}) \\
&=& \sum_{IJ} \sum_{1\le l < t \le q} \varphi(x_{i_1}, \ldots, x_{i_p}) \cdot \psi([x_{j_l},x_{j_t}],x_{j_1},\dots,\widehat{x_{j_l}},\dots,\widehat{x_{j_t}},\dots,x_{j_q}) ,
\end{eqnarray*}
and
\begin{eqnarray*}
\Big(\dcobound(\varphi \smile \psi)\Big) (x_1,\ldots, x_{p+q}) 
&=& 
\sum_{1 \le \alpha < \beta \le p+q} 
(\varphi \smile \psi) ([x_\alpha,x_\beta], x_1, \ldots, \widehat{x_\alpha}, \ldots, \widehat{x_\beta},\ldots, x_{p+q})
\\
&=& 
\sum_{IJ} \sum_{1 \le r < s \le p}
\varphi([x_{i_r},x_{i_s}],x_{i_1},\dots,\widehat{x_{i_r}},\dots,\widehat{x_{i_s}},\dots,x_{i_p}) \cdot \psi(x_{j_1},\ldots,x_{j_q})
\\
&+& 
\sum_{IJ} \sum_{1\le l < t \le q} \varphi(x_{i_1}, \ldots, x_{i_p}) \cdot \psi([x_{j_l},x_{j_t}],x_{j_1},\dots,\widehat{x_{j_l}},\dots,\widehat{x_{j_t}},\dots,x_{j_q}) ,
\end{eqnarray*}
and the equality (\ref{eq-d}) follows.
\end{proof}

It is obvious that the derivation $\dcobound$ preserves the grading of 
$\sym^\bullet(L,K)$.

As for any ring with a derivation $D$, the kernel $\Ker D$ is a subring, and the
image of $D$ is an ideal in $\Ker D$, we get:

\begin{corollary}
For any commutative Lie algebra $L$:
\begin{enumerate}[\upshape(i)]
\item
The space $\Z^\bullet_{comm}(L,K)$ of commutative cocycles is a subring of the 
ring $\sym^\bullet(L,K)$.
\item
The space $\B^\bullet_{comm}(L,K)$ of commutative coboundaries is an ideal of 
the ring $\Z^\bullet_{comm}(L,K)$.
\item
The commutative cohomology $\Homol^\bullet_{comm}(L,K)$ is a graded associative
ring with respect to the product $\smile$.
\end{enumerate}
\end{corollary}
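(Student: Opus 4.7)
The plan is to deduce all three parts of the corollary as formal consequences of the Leibniz rule (\ref{eq-d}) established in the preceding proposition, together with the fact that $\dcobound$ preserves the grading. Indeed, the statement fits the abstract pattern that for any derivation $D$ of a (graded) associative ring $R$, the kernel $\Ker D$ is a (graded) subring and $\im D$ is a (graded) two-sided ideal of $\Ker D$. Since $\Z^\bullet_{comm}(L,K) = \Ker\dcobound$ and $\B^\bullet_{comm}(L,K) = \im\dcobound$ by definition, the corollary will reduce to verifying this abstract pattern in our concrete setting.

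For part (i), I would argue that if $\varphi \in \Z^p_{comm}(L,K)$ and $\psi \in \Z^q_{comm}(L,K)$, then by (\ref{eq-d}),
\[
\dcobound(\varphi \smile \psi) = \dcobound\varphi \smile \psi + \varphi \smile \dcobound\psi = 0,
\]
so $\varphi \smile \psi \in \Z^{p+q}_{comm}(L,K)$. One should also note that the constant cochain $1 \in \sym^0(L,K) = K$ is a cocycle, yielding a unit in $\Z^\bullet_{comm}(L,K)$; since $\smile$ clearly preserves the grading (a product of a $p$-cochain and a $q$-cochain lies in degree $p+q$), this makes $\Z^\bullet_{comm}(L,K)$ a graded subring of $\sym^\bullet(L,K)$.

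For part (ii), given $\varphi \in \Z^p_{comm}(L,K)$ and $\psi = \dcobound\eta \in \B^q_{comm}(L,K)$, the Leibniz rule gives
\[
\varphi \smile \psi = \varphi \smile \dcobound\eta = \dcobound(\varphi \smile \eta) - \dcobound\varphi \smile \eta = \dcobound(\varphi \smile \eta) \in \B^{p+q}_{comm}(L,K),
\]
and similarly $\psi \smile \varphi \in \B^{p+q}_{comm}(L,K)$, proving that $\B^\bullet_{comm}(L,K)$ is a graded ideal of $\Z^\bullet_{comm}(L,K)$. Part (iii) then follows by passing to the quotient $\Z^\bullet_{comm}(L,K)/\B^\bullet_{comm}(L,K) = \Homol^\bullet_{comm}(L,K)$: the product $\smile$ descends to a well-defined graded product on cohomology by (ii), and graded associativity is inherited from $\sym^\bullet(L,K)$, where it was already observed to hold. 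No step here is a real obstacle; the only thing one has to watch is bookkeeping of degrees, which is immediate since $\dcobound$ and $\smile$ are both homogeneous of the expected degrees.
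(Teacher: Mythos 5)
Your proposal is correct and is essentially the paper's own argument: the paper deduces the corollary in one line from the general fact that for a ring with a derivation $D$ (here $\dcobound$, which is a derivation by the preceding proposition and preserves the grading), $\Ker D$ is a subring and $\im D$ is an ideal of $\Ker D$. You have merely unwound that abstract statement into the explicit Leibniz-rule computations, which is fine (and the sign in your part (ii) is immaterial in characteristic $2$).
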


\section{Examples}

In this section we compute the commutative cohomology in several interesting 
cases.

\subsection{Abelian algebra}

If $L$ is an abelian (commutative) Lie algebra, the differential in the complex
$\sym^\bullet(L,K)$ vanishes, and $\Homol^n_{comm}(L,K) = \sym^n(L,K)$ for any 
$n$.

\subsection{$1$-dimensional algebra}

Obviously, the $1$-dimensional commutative Lie algebra is abelian (and hence is
a Lie algebra). For any module $M$ over the $1$-dimensional algebra $Kx$,
$[x,x] = 0$, we have 
$\sym^n(Kx,M) = \Hom_K\big((Kx)^{\otimes n},M\big) \simeq M$. The differential 
$\dcobound: \sym^n(Kx,M) \to \sym^{n+1}(Kx,M)$ reduces to 
$\dcobound \varphi(x, \dots, x) = n x \bullet \varphi (x,\dots,x)$, and
hence both Lie commutative and Leibniz complexes are reduced to the complex
$$
0 \to M \overset{0}\to M \overset{x}\to M \overset{0}\to M \overset{x}\to \dots
$$
whose cohomology is
$$
\Homol^n_{comm}(Kx,M) = \HL^n(Kx,M) \simeq \begin{cases}
\Ker(x|_M)    & \text{if } n \text{ is even}     \\
\Coker(x|_M)  & \text{if } n \text{ is odd}.
\end{cases}
$$

\subsection{$2$-dimensional algebra}

Let $L$ be the $2$-dimensional nonabelian Lie algebra with the basis $\{a,b\}$,
$[a,b] = a$. Choose a basis in $\sym^n(L,K)$ consisting of the cochains 
$\chi_{pq}$, $p+q = n$, defined by
\[
\chi_{pq}(\underbrace{a,\ldots, a}_r, \underbrace{b,\ldots,b}_s) = 
\begin{cases}
1  & \text{if } p=r \text{ and } q=s \\
0  & \text{otherwise} .
\end{cases}
\]

We have: 
$$
\dcobound \chi_{pq} (\underbrace{a,\ldots, a}_r, \underbrace{b,\ldots, b}_s) =
rs \> 
\chi_{pq}(\underbrace{a,\ldots, a}_{r-1},a, \underbrace{b,\ldots, b}_{s-1}) ,
$$
and hence
\[
\dcobound \chi_{pq} = p(q+1) \> \chi_{p,q+1}.
\]

It follows that $\B_{comm}^n(L,K)$ has a basis consisting of $\chi_{pq}$, where
$p+q = n$, and both $p,q$ are odd; and $\Z_{comm}^n(L,K)$ has a basis consisting
of $\chi_{pq}$, where $p+q = n$, and either $p$ is even, or $q$ is odd.

Therefore, the cocycles $\chi_{pq}$, where $p+q=n$, and $p$ is even, can be 
chosen as basic cocycles whose representatives span 
$\Homol_{comm}^n(L,K)$.

To determine the cup product in terms of this basis, note that by 
(\ref{cupproduct}),
\[
\chi_{pq} \smile \chi_{rs} 
(\underbrace{a,\ldots, a}_{p+r}, \underbrace{b,\ldots, b}_{q+s}) = 
\binom{p+r}{p} \, \binom{q+s}{q} ,
\]
and hence
\[
\chi_{pq} \smile \chi_{rs} = 
\binom{p+r}{p} \, \binom{q+s}{q} \, \chi_{p+r, q+s}.
\]

In particular,
\[
\chi_{p0} \smile \chi_{0s} = \chi_{ps} ,
\]
which shows that the basic cocycles of the form $\chi_{p0}$ and $\chi_{0s}$ 
generate the whole $\Homol_{comm}^\bullet(L,K)$ as a ring.

\subsection{Heisenberg algebra}

The $(2\ell+1)$-dimensional Lie algebra with basis 
$a,b_1,\ldots, b_\ell,c_1,\ldots,c_\ell$, and multiplication
\[
[b_i,a] = [c_i,a] = 0, \quad [b_i,c_j]  = \begin{cases}
a & \text{ if } i=j \\
0 & \text{ if } i\ne j ,
\end{cases}
\]
is called the \emph{Heisenberg algebra}, and is denoted by $\mathcal H_\ell$.

To compute commutative cohomology of $\mathcal H_\ell$ with coefficients in the
trivial module, we will use algebraic discrete Morse theory, briefly recalled in
Appendix (which should be consulted for all undefined notions and notation in this section). A very similar in spirit computation of the usual 
Che\-val\-ley--Ei\-len\-berg \emph{homology} of the Heisenberg algebra in 
characteristic $2$, was performed earlier in \cite{S2}.

Any cochain $\varphi \in \sym^n(\mathcal H_\ell, K)$ is determined uniquely by 
its values on the basic elements:
\begin{equation}\label{eq-val}
\varphi(\underbrace{a,\ldots,a}_\alpha, \underbrace{b_1,\ldots, b_1}_{\beta_1}, \ldots, \underbrace{b_\ell, \ldots,b_\ell}_{\beta_\ell}, \underbrace{c_1,\ldots, c_1}_{\gamma_1}, \ldots, \underbrace{c_\ell, \ldots,c_\ell}_{\gamma_\ell}),
\end{equation}
where 
\begin{equation}\label{eq-part}
\alpha + \beta_1 +\cdots + \beta_\ell + \gamma_1 + \cdots +\gamma_\ell = n .
\end{equation}

Assuming $\beta = (\beta_1, \dots, \beta_\ell)$ and 
$\gamma = (\gamma_1, \dots, \gamma_\ell)$, the following shorthand notation will
be used: $\varphi(\alpha;\beta;\gamma)$ will denote the corresponding value 
(\ref{eq-val}), and $\alpha + \beta + \gamma$ will denote the left-hand side of (\ref{eq-part}). At the same time, $\beta \pm \beta^\prime$ denotes the
vector obtained by the usual coordinate-wise addition or subtraction of vectors in 
$\mathbb Z_{\ge 0}^\ell$, i.e. $(\beta_1 \pm \beta_1^\prime, \dots, \beta_\ell \pm \beta_\ell^\prime)$, 
similarly for $\gamma$'s. The vector of length $\ell$ having $1$ at the $i$th 
place, and $0$ at all other places, will be denoted by $\mathbbold 1_i$. 
Further, define
\begin{alignat*}{2}
& I_0(\beta) = 
\set{\,i \in \{1,\ldots, \ell\}}{\beta_i \text{ is even}\,&}\phantom{.}
\\
& I_1(\beta) = 
\set{\,i \in \{1,\ldots, \ell\}}{\beta_i \text{ is odd}\,&} .
\end{alignat*}

For any triple $(\alpha; \beta; \gamma)$ such that 
$\alpha + \beta + \gamma = n+1$, we have:
\begin{equation}\label{cobofh}
\dcobound \varphi(\alpha;\beta;\gamma) = 
\sum_{\substack{1 \le i \le \ell \\ \beta_i >0, \gamma_i > 0}}
\beta_i\gamma_i \> 
\varphi(\alpha+1; \beta - \mathbbold 1_i; \gamma - \mathbbold 1_i) .
\end{equation}

Now choose a basis $X_n$ in $\sym^n(\mathcal H_\ell, K)$ consisting of the 
cochains $\chi_{(\alpha; \beta; \gamma)}$, $\alpha + \beta + \gamma = n$, 
defined by
$$
\chi_{(\alpha; \beta; \gamma)}(\tilde\alpha; \tilde\beta; \tilde\gamma) = 
\begin{cases}
1 & \text{ if } 
    \tilde\alpha = \alpha, \tilde\beta = \beta, \tilde\gamma = \gamma \\
0 & \text{ otherwise}.
\end{cases}
$$

The formula (\ref{cobofh}) implies then $\dcobound \chi_{(0;\beta;\gamma)} = 0$,
and 
\[
\dcobound 
\chi_{(\alpha;\beta;\gamma)}
(\alpha-1;\beta + \mathbbold 1_i;\gamma + \mathbbold 1_i) = 
(\beta_i + 1)(\gamma_i+1)
\]
for any $\alpha >0$ and $1 \le i \le \ell$. This, in its turn, implies
\begin{equation}\label{dofHeis}
\dcobound \chi_{(\alpha;\beta;\gamma)} = \begin{cases}
\sum_{i=1}^\ell (\beta_i +1)(\gamma_i +1) \> 
\chi_{(\alpha-1; \beta + \mathbbold 1_i; \gamma + \mathbbold 1_{i})} 
  & \text{ if } \alpha > 0 \\
0 & \text{ if } \alpha = 0 .
\end{cases}
\end{equation}

Now we are in the position to apply algebraic discrete Morse theory to the 
cochain complex \linebreak 
$\big(\sym^\bullet(\mathcal H_\ell,K), \dcobound\big)$. In the graph $\Gamma\big(\sym^\bullet(\mathcal H_\ell,K)\big)$ constructed from this 
complex with the chosen basis $\bigcup_{n\ge 0} X_n$, define the set $M$ 
consisting of all edges of the form
\[
\chi_{(\alpha;\beta;\gamma)} \to
\chi_{(\alpha-1; \beta + \mathbbold 1_k; \gamma + \mathbbold1_k)} , 
\]
where $k = \max \bigl( I_0(\beta)\cap I_0(\gamma) \bigr)$ (so both 
$\beta_k$, $\gamma_k$ are even), and 
$$
k = \max \bigl(I_0(\beta)\cap I_0(\gamma) \bigr) > 
\max \bigl( I_1(\beta) \cap I_1(\gamma) \bigr) .
$$

The set $M$ can be depicted as horizontal arrows in the following graph (where 
it is assumed that $i,j \in I_1(\beta)\cap I_1(\gamma)$):
\[
\begin{CD}
\vdots @. \vdots 
\\
@AAA      @AAA   
\\
\chi_{(\alpha-1; \beta + \mathbbold 1_i; \gamma + \mathbbold 1_i)} 
@>>>
\chi_{(\alpha-2; \beta + \mathbbold 1_k + \mathbbold 1_i; \gamma + \mathbbold 1_k + \mathbbold 1_i)} 
\\
@AAA      @AAA    
\\
\chi_{(\alpha;\beta;\gamma)} 
@>>>
\chi_{(\alpha-1; \beta + \mathbbold 1_k; \gamma + \mathbbold 1_k)} 
\\
@AAA      @AAA
\\
\chi_{(\alpha+1; \beta - \mathbbold 1_j; \gamma - \mathbbold 1_j)} 
@>>>
\chi_{(\alpha; \beta + \mathbbold 1_k - \mathbbold 1_j; \gamma + \mathbbold 1_k - \mathbbold 1_j)} 
\\
@AAA      @AAA
\\
\vdots @. \vdots
\end{CD}
\]

It is clear that after flipping all the horizontal arrows, the new graph 
$\Gamma^M\big(\sym^\bullet(\mathcal H_\ell,K)\big)$ does not contain directed 
cycles. Also, no vertex is incident to more than one edge in $M$. Therefore, $M$ is an 
acyclic matching.

The set of vertices in $V = \bigcup_{n\ge 0} X_n$ which do not serve as a tail 
for any arrow in $M$, is equal to
\begin{equation}\label{eq-s0}
\{\> \chi_{(0;\beta;\gamma)} \>\} \cup 
\set{\>\chi_{(\alpha;\beta;\gamma)}}
{\alpha > 0, \> \max \bigl( I_0(\beta) \cap I_0(\gamma) \bigr) < \max \bigl( I_1(\beta) \cap I_1(\gamma) \bigr) \>} ,
\end{equation}
while the set of vertices which do not serve as a head for any arrow in $M$, is
equal to
\begin{equation}\label{eq-s1}
\set{\> \chi_{(\alpha;\beta;\gamma)}}
{\max \bigl( I_0(\beta) \cap I_0(\gamma) \bigr) > \max \bigl( I_1(\beta) \cap I_1(\gamma) \bigr) \>} .
\end{equation}

Thus $\bigcup_{n\ge 0} X_n^M$, being the intersection of the sets (\ref{eq-s0})
and (\ref{eq-s1}), is equal to the set $\mathscr C_0 \cup \mathscr C_1$, where
\begin{equation*}
\mathscr C_0 = \set{\> \chi_{(0;\beta;\gamma)}}
{\max \bigl( I_0(\beta) \cap I_0(\gamma) \bigr) > 
\max \bigl(I_1(\beta) \cap I_1(\gamma) \bigr) \>} ,
\end{equation*}
and
\begin{equation*}
\mathscr C_1 = \set{\> \chi_{(\alpha;\beta;\gamma)}}
{I_0(\beta) \cap I_0(\gamma) = I_1(\beta) \cap I_1(\gamma) = \varnothing \>} .
\end{equation*}

By (\ref{dofHeis}), all cochains from both $\mathscr C_0$ and $\mathscr C_1$ are
cocycles, and then by Theorem from Appendix A, $\mathscr C_0 \cup \mathscr C_1$
forms a basis of the cohomology $\Homol_{comm}^\bullet (\mathcal H_\ell, K)$. (To be more precise, a basis of 
the $n$th degree cohomology $\Homol_{comm}^n (\mathcal H_\ell, K)$ is formed by
cocycles from $\mathscr C_0$ with $\beta + \gamma = n$, and by cocycles from
$\mathscr C_1$ with $\alpha + \beta + \gamma = n$).

Let us look now at the ring structure of 
$\Homol^\bullet_{comm}(\mathcal H_\ell, K)$. For any two triples 
$(\alpha; \beta; \gamma)$ and $(\alpha^\prime; \beta^\prime; \gamma^\prime)$ we
have:
\begin{equation*}
\chi_{(\alpha; \beta; \gamma)} \smile 
\chi_{(\alpha^\prime; \beta^\prime; \gamma^\prime)} = 
\binom{\alpha + \alpha^\prime}{\alpha} \,
\binom{\beta + \beta^\prime}{\beta}    \,
\binom{\gamma + \gamma^\prime}{\gamma} \>
\chi_{(\alpha + \alpha^\prime; \beta + \beta^\prime; \gamma + \gamma^\prime)} ,
\end{equation*}
where $\binom{\beta^\prime}{\beta}$ is a shorthand for the product 
$\binom{\beta_1^\prime}{\beta_1} \cdots \binom{\beta_\ell^\prime}{\beta_\ell}$,
similarly for $\gamma$'s. From this formula it is clear that 
$\mathscr C_0 \smile \mathscr C_0 \subseteq \mathscr C_0$,
$\mathscr C_0 \smile \mathscr C_1 \subseteq \mathscr C_1$, and
$\mathscr C_1 \smile \mathscr C_1 \subseteq \mathscr C_1$, and therefore, as a 
ring, $\Homol^\bullet_{comm}(\mathcal H_\ell, K)$ is decomposed into
the semidirect sum of two subrings:
$$
\Homol^\bullet_{comm}(\mathcal H_\ell, K) \simeq 
K\mathscr C_0 \inplus K\mathscr C_1 ,
$$
where $K\mathscr C_0$ acts on $K\mathscr C_1$.

\subsection{Zassenhaus algebras}\label{ss-zass}

The algebra $W_1(n)$ is defined as an algebra of special derivations
$\mathcal O_1(n) \partial$ of the divided powers algebra $\mathcal O_1(n)$ (see,
e.g., \cite{dzhu-abd}, \cite{jurman}, or \cite{grishkov-zus} for details). It 
has the basis $\set{e_i = x^{(i+1)} \partial}{-1 \le i \le 2^n-2}$ with 
multiplication
$$
[e_i,e_j] = \begin{cases}
\binom{i+j+2}{i+1} \> e_{i+j} &\text{if } -1 \le i+j \le 2^n-2 \\
0                             &\text{otherwise}.
\end{cases}
$$

In characteristic $2$, unlike in bigger characteristics, the algebra $W_1(n)$ is
not simple, but its commutant $W_1^\prime(n)$ of dimension $2^n-1$, linearly
spanned by elements $\set{e_i}{-1 \le i \le 2^n-3}$, is. The algebras
$W_1^\prime(n)$ are referred as \emph{Zassenhaus algebras}. The basic elements
provide the standard grading
$$
W_1^\prime(n) = \bigoplus_{i=-1}^{2^n-3} K e_i .
$$

In the first nontrivial case $n=2$, the algebra $W_1^\prime(2)$ is
$3$-dimensional, with multiplication table
\begin{equation*}
[e_{-1},e_0] = e_{-1}, \quad [e_1,e_0] = e_1, \quad [e_{-1},e_1] = e_0 ,
\end{equation*}
and is an analog of $\Sl(2)$ in big characteristics.

Another realization of the algebra $W_1(n)$ is defined over the field $\GF(2^n)$
as the algebra with the basis $\set{f_\alpha}{\alpha \in \GF(2^n)}$ and 
multiplication
$$
[f_\alpha, f_\beta] = (\alpha + \beta) f_{\alpha + \beta}
$$
for $\alpha, \beta \in \GF(2^n)$. Again, in characteristic $2$ this algebra is 
not simple, but its commutant $\set{f_\alpha}{\alpha \in \GF(2^n)^*}$, 
isomorphic to $W_1^\prime(n)$, is.

For any $k$ elements $\alpha_1, \dots, \alpha_k \in \GF(2^n)^*$ such that the 
sum of any number of these elements is nonzero, the $2^k - 1$ elements 
$f_{\alpha_{i_1} + \dots + \alpha_{i_\ell}}$, 
$1 \le i_1 \le \dots \le i_\ell \le k$, span a subalgebra
$L(\alpha_1, \dots, \alpha_k)$ of $W_1^\prime(n)$ isomorphic to $W_1^\prime(k)$.

\begin{proposition}
$\Homol^2_{comm}(W_1^\prime(n),K)$ has dimension $n$. The basic cocycles
can be chosen as
\begin{equation}\label{eq-comm2}
e_i \vee e_j \mapsto \begin{cases}
1 &\text{if } i=j=2^k - 2, \text{or } \{i,j\} = \{-1,2^{k+1}-3\} \\
0 &\text{otherwise}.
\end{cases}
\end{equation}
for $k = 0,\dots,{n-1}$.
\end{proposition}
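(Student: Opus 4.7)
My plan is to exploit the natural $\mathbb Z$-grading $\deg e_i = i$ of $W_1^\prime(n)$, under which the bracket $[e_i, e_j] = \binom{i+j+2}{i+1}\, e_{i+j}$ is homogeneous of degree $0$. This grading lifts to the cochain complex $\sym^\bullet(W_1^\prime(n), K)$, and because $\dcobound$ preserves weight, the cohomology decomposes as $\Homol^2_{comm}(W_1^\prime(n), K) = \bigoplus_{w} \Homol^2_{comm}(W_1^\prime(n), K)_w$, reducing the computation to finite-dimensional weight-wise linear algebra. In weight $w$, the cochain space $\sym^2(W_1^\prime(n), K)_w$ is spanned by $\{e_i \vee e_j : -1 \le i \le j \le 2^n - 3,\, i+j = w\}$, and the cocycle and coboundary conditions become explicit linear systems whose entries are the binomial coefficients $\binom{i+j+2}{i+1} \bmod 2$.

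First, I would verify directly that each $\psi_k$ from the statement is a cocycle by substituting into $\dcobound \psi_k(e_i, e_j, e_\ell) = 0$ for each admissible triple with $i + j + \ell = 2^{k+1} - 4$. Lucas' theorem controls the vanishing of the binomials appearing here --- in particular, the repeated appearance of $\binom{2^{k+1}}{r}$ (which is $\equiv 0 \pmod 2$ except at $r \in \{0, 2^{k+1}\}$) should collapse most summands, reducing the check to a short list of trivial identities. For non-triviality, I would observe that no element of $\dcobound \sym^1(W_1^\prime(n), K)$ has a nonzero coefficient at the diagonal $e_{2^k - 2} \vee e_{2^k - 2}$: the only candidate contribution is $\binom{2^{k+1} - 2}{2^k - 1}\, f(e_{2^{k+1} - 4})$, and Lucas' theorem gives $\binom{2^{k+1} - 2}{2^k - 1} \equiv 0 \pmod 2$, since the lowest binary digit of $2^k - 1$ is $1$ while that of $2^{k+1} - 2$ is $0$. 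Hence $\psi_k$ is non-trivial in $\Homol^2_{comm}$.

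The main difficulty is the upper bound $\dim \Homol^2_{comm}(W_1^\prime(n), K) \le n$: I must show that outside the $n$ special weights $w_k = 2^{k+1} - 4$, cocycles and coboundaries coincide, while at each $w_k$ the quotient is exactly one-dimensional. My preferred route is induction on $n$ using the chain of subalgebras $W_1^\prime(k) \simeq L(\alpha_1, \dots, \alpha_k) \subseteq W_1^\prime(n)$ from \S\ref{ss-zass}: a careful analysis of the restriction map should recover the $n-1$ classes $\psi_0, \dots, \psi_{n-2}$ from the inductive hypothesis, while the top-weight class $\psi_{n-1}$ must be produced and bounded separately. The combinatorial bookkeeping of which $\binom{m}{r}$ are odd is the crux; should the induction fail to close, the fallback is a uniform weight-by-weight analysis driven directly by Lucas' theorem and the observation that many cocycle equations become redundant due to the sparsity of odd binomial coefficients modulo $2$.
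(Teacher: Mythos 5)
Your verification that the maps (\ref{eq-comm2}) are cocycles, and your non-triviality argument, are both sound and essentially match the paper's: the paper simply notes that every $2$-coboundary with trivial coefficients is alternating (since $\dcobound\omega(x,x)=\omega([x,x])=0$ in a genuine Lie algebra), so a cocycle with a nonzero diagonal value cannot be a coboundary; your Lucas computation of $\binom{2^{k+1}-2}{2^k-1}\equiv 0$ is a more laborious route to the same fact. The weight decomposition also gives you linear independence of the $n$ classes for free, as in the paper.

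The genuine gap is the upper bound $\dim\Homol^2_{comm}(W_1^\prime(n),K)\le n$, which is the actual content of the proposition and which your proposal only plans rather than proves. Neither of your two routes is carried out, and the inductive one as described has a structural hole: knowing that the restriction of a cocycle $\varphi$ to each subalgebra $W_1^\prime(k)\subset W_1^\prime(n)$ is cohomologous to one of the standard cocycles tells you nothing global until you explain how the locally chosen $1$-cochains patch together. This patching is exactly the heart of the paper's argument, and it is carried out in the \emph{other} basis $\{f_\alpha\}_{\alpha\in\GF(2^n)^*}$, where $[f_\alpha,f_\beta]=(\alpha+\beta)f_{\alpha+\beta}$: one restricts $\varphi$ to the $7$-dimensional subalgebras $L(\alpha,\beta,\gamma)\simeq W_1^\prime(3)$, uses the base case $n=3$ (done by direct or computer calculation) to write $\varphi(f_\alpha,f_\beta)=(\alpha+\beta)\,\omega_{\alpha,\beta,\gamma}(f_{\alpha+\beta})$ plus a diagonal part, and then shows $\omega_{\alpha,\beta,\gamma}$ is independent of each of $\alpha,\beta,\gamma$ by embedding the pair $\{\alpha,\beta\}$ into varying triples; this produces a single global $\omega$ with $\varphi-\dcobound\omega$ diagonal. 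The final count $\dim=n$ then comes from the relations $\alpha\lambda_\alpha+\beta\lambda_\beta+(\alpha+\beta)\lambda_{\alpha+\beta}=0$, which reduce the diagonal data to its values on an $\GF(2)$-basis of $\GF(2^n)$ --- a counting step that is not visible at all in the $e_i$-basis weight picture you propose. Your fallback (a uniform weight-by-weight Lucas analysis in the $e_i$-basis) may well be feasible, but it is a substantial combinatorial project that you have not begun, so as it stands the proposal establishes only $\dim\Homol^2_{comm}(W_1^\prime(n),K)\ge n$.
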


\begin{proof}
It is straightforward to check that the maps (\ref{eq-comm2}) are indeed
commutative $2$-cocycles (that boils down to the fact that if $i,j \ge 0$
and $i+j = 2^k-2$, then $[e_i,e_j] = \binom{2^k}{i+1} \> e_{2^k-2} = 0$). Since 
these cocycles are non-alternating, and $2$-coboundaries are alternating, their
cohomological independence is equivalent to the linear independence, and the 
latter follows from the fact that all they have different weights with respect 
to the standard grading of $W_1^\prime(n)$. Thus 
$\dim \Homol^2_{comm}(W_1^\prime(n),K) \ge n$. To prove that we have here an equality, we will switch to the basis $\{f_\alpha\}$.

We shall prove that the basic cocycles in $\Homol_{comm}^2(W_1^\prime(n),K)$
can be chosen as
\begin{equation}\label{eq-falpha}
f_\alpha \vee f_\beta \mapsto \begin{cases}
\lambda_\alpha &\text{if } \alpha = \beta \\
0              &\text{if } \alpha \ne \beta
\end{cases}
\end{equation}
where $\alpha, \beta \in \GF(2^n)^*$, $\lambda_\alpha \in K$, subject to linear
relations
\begin{equation}\label{eq-rel}
\alpha \lambda_\alpha + \beta \lambda_\beta + (\alpha + \beta) \lambda_{\alpha + \beta} = 0
\end{equation}
for any $\alpha,\beta \in \GF(2^n)^*$, $\alpha \ne \beta$.

We proceed similarly to \cite{alia-db} where, in order to prove the vanishing of
commutative $2$-cocycles on simple classical Lie algebras in characteristic 
$>2$, first the rank $2$ case is established, and the general case follows 
easily.

So, first consider the cases $n=2$ and $n=3$. In that cases the statement 
follows from direct computations, similar to those performed in 
\cite[Theorem 6.5]{alia-d} and \cite{alia-db}.
These computations can be also performed on computer, using a simple GAP
program for computations of the space of commutative $2$-cocycles on a given
Lie algebra (see \cite[footnote at \S 3]{comm2}).

In the general case $n \ge 3$, take arbitrary
$\alpha, \beta, \gamma \in \GF(2^n)^*$, $\alpha + \beta + \gamma \ne 0$,
and restrict an arbitrary cocycle $\varphi \in \Z^2_{comm}(W_1^\prime(n),K)$ to
the $7$-dimensional subalgebra $L(\alpha,\beta,\gamma)$ linearly spanned by
$f_\alpha,\> f_\beta,\> f_\gamma, \> f_{\alpha + \beta}, \>
f_{\alpha + \gamma}, \> f_{\beta + \gamma}, \> f_{\alpha + \beta + \gamma}$.
Obviously, this restriction is a commutative $2$-cocycle on \linebreak
$L(\alpha,\beta,\gamma) \simeq W_1^\prime(3)$, and by the just established case
$n=3$, we have that first,
$$
\varphi(f_\alpha, f_\beta) =
(\alpha + \beta) \omega_{\alpha,\beta,\gamma} (f_{\alpha + \beta})
$$
for some linear map 
$\omega_{\alpha,\beta,\gamma}: L(\alpha,\beta,\gamma) \to K$, and second, that 
the relation (\ref{eq-rel}) holds for 
$\lambda_\alpha = \varphi(f_\alpha, f_\alpha)$.
Embedding the pair $\alpha, \beta$ into another triple
$\alpha, \beta, \gamma^\prime$, we see that $\omega_{\alpha,\beta,\gamma}$
does not depend on $\gamma$. In the same vein, it does not depend neither on
$\alpha$, nor on $\beta$, so
$\varphi(f_\alpha,f_\beta) = \dcobound \omega ([f_\alpha, f_\beta])$ for any
$\alpha, \beta \in \GF(2^n)^*$, $\alpha \ne \beta$, and some linear map
$\omega: W_1^\prime(n) \to K$. Consequently, $\varphi$ can be represented as the
sum of $\dcobound \omega$ and a map of the form (\ref{eq-falpha}). The latter
maps are obviously commutative $2$-cocycles, and we are done.

It remains to determine the dimension of $\Homol^2_{comm}(W_1^\prime(n),K)$.
The relation (\ref{eq-rel}) can be expanded as
$$
\lambda_{\alpha_1 + \dots + \alpha_k} =
\frac{\alpha_1}{\alpha_1 + \dots + \alpha_k} \lambda_{\alpha_1} + \dots +
\frac{\alpha_k}{\alpha_1 + \dots + \alpha_k} \lambda_{\alpha_k}
$$
for any $\alpha_1, \dots, \alpha_k \in \GF(2^n)^*$, 
$\alpha_1 + \dots + \alpha_k \ne 0$, what means that
$\dim \Homol^2_{comm}(W_1^\prime(n),K)$ is equal to the number of the generators
of the additive group of $\GF(2^n)$. The latter number is equal to dimension of
$\GF(2^n)$ as a vector space over $\GF(2)$, and hence is equal to $n$.
\end{proof}

Note that since $\dim \B^2(W_1^\prime(n),K) = \dim W_1^\prime(n) = 2^n-1$, we
have $\dim \Z^2_{comm}(W_1^\prime(n),K) = 2^n + n - 1$.

Note also, that the calculations above imply that every alternating $2$-cocycle
on $W_1^\prime(n)$ is a coboundary, and hence $\Homol^2(W_1^\prime(n),K) = 0$.

\subsection{Eick's algebras}

Commutative cohomology may serve as another invariant helping to distinguish
algebras. In \cite{eick}, a computer-generated list of simple Lie algebras over
$\GF(2)$ of dimension $\le 20$ was presented, and sophisticated (nonlinear) 
methods were used to establish non-isomorphism of algebras in the list. 

For example, computer calculations with GAP show that the degree $2$ 
commutative cohomology with trivial coefficients of the two new 
$15$-dimensional simple Lie algebras in Eick's list, number 7 and 8, is of 
dimension $1$ and $2$ respectively. All the other ``conventional'' ``linear'' 
invariants of these two algebras we can think of (dimension of low-degree 
Chevalley--Eilenberg cohomology with trivial and adjoint coefficients, dimension of the $p$-envelope
and of the sandwich subalgebra, the absence of nondegenerate symmetric invariant forms) do coincide.

\section{Further questions}

Finally, we take a liberty to indicate some avenues for further research. Some 
of the questions listed here seem to be of a purely technical character, while 
others seem to be difficult and probably will require new nontrivial approaches.

\subsection{}
Is it possible to represent the commutative cohomology as a derived functor?
(This question seems to be tricky, as it is hard to imagine what the other
candidate for the role of the universal enveloping algebra in the commutative
case could be, see \S \ref{ss-u}).

\subsection{}
To compute commutative cohomology for various ``interesting'' algebras. In 
particular, for the three-dimensional simple Lie algebra, and for free Lie 
algebras.

\subsection{}
To get a formula relating $\Homol^2_{comm}(\Sl_n(A),K)$ and (a version of) 
cyclic cohomology of $A$ in the spirit of \cite{KL}. A glance at 
\cite[Proposition 2.1]{grishkov-zus} may suggest that the version of cyclic
cohomology, peculiar to characteristic $2$, which should appear here, is those
where the (skew)symmetric cochains are replaced by alternating ones.

\subsection{}
Establish an analog of the Hopf formula for the second degree commutative
\emph{homology} with trivial coefficients.

\subsection{}
Define the cup product in \S \ref{sec-cup} the same standard way as it is done 
for the Chevalley--Eilenberg cohomology and other classic cohomology theories, 
i.e. as a composition of the isomorphism provided by the K\"unneth formula, and
the map between cohomology of $L \oplus L$ and $L$ (see, for example, 
\cite[Exercise 7.3.8]{weibel}). For this, of course, we will need (a version of)
the K\"unneth formula for commutative cohomology. 

\subsection{}
The classical Stallings-Swan theorem says that groups of cohomological dimension
$1$ are free. In characteristics $0$ and $2$ it is an open question whether Lie
algebras of cohomological dimension $1$ are free. What about commutative Lie 
algebras? (Note that since we do not have a definition of commutative cohomology as a derived functor, the very notion of cohomological dimension
in this case is a bit problematic).

\subsection{}
It is well known (and easy to see) that the Euler-Poincar\'e characteristic of 
cohomology of a finite-dimensional Lie algebra, i.e. the alternating sum of 
dimensions of cohomology in all degrees, vanishes. The very notion of the 
Euler--Poincar\'e characteristic of the commutative cohomology does not make 
sense, as the sum
$$
\dim \Homol^0_{comm}(L,M) - \dim \Homol^1_{comm}(L,M) 
+ \dim \Homol^2_{comm}(L,M) - \dots
$$
is, generally, infinite and thus diverges. Can this sum be assigned a 
reasonable value using the theory of divergent series, similarly how it was 
(partially) done for cohomology of Lie superalgebras in \cite{euler}?

\subsection{}
As shown in \S \ref{ss-zass}, the dimension of the space of commutative 
$2$-cocycles with trivial coefficients on the Zassenhaus algebra 
$W_1^\prime(n)$, is equal to $2^n + n - 1$. Find a link with combinatorial 
interpretation of this number as the shortest length of a sequence of $0$ and 
$1$ containing all subsequences of length $n$ (see \cite[A052944]{eis}).

\subsection{}
Whether the variety of commutative Lie algebras is Schreier, i.e., whether a 
subalgebra of a free commutative Lie algebra is free?

\bigskip

Let us note at the end that recently Friedrich Wagemann has constructed a 
Hochschild--Serre-like spectral sequence for commutative cohomology. The 
construction more or less repeats the construction of the Hochschild--Serre 
spectral sequence for the Chevalley--Eilenberg cohomology.

\section*{Acknowledgements}

Thanks are due to Alexei Lebedev, Dimitry Leites, Ivan Shestakov, and 
Friedrich Wagemann for stimulating discussions and useful remarks. During the 
early stages of this work, Lopatkin enjoyed the hospitality of Czech Technical 
University in Prague, with special thanks to Pavel \v{S}\v{t}ov\'i\v{c}ek and 
\v{C}estm\'ir Burd\'ik; Zusmanovich enjoyed the hospitality of University of 
S\~ao Paulo. GAP \cite{gap} was utilized to check some computations performed in
this paper.

\appendix
\section*{Appendix. Algebraic discrete Morse theory}

Algebraic discrete Morse theory is an algebraic version of discrete Morse theory
developed independently by Sk\"oldberg, \cite{Sc}, and by J\"ollenbeck and 
Welker, \cite{JW}. It allows one to construct, starting from a chain complex, a
new homotopically equivalent smaller complex using directed graphs. Here, for 
the convenience of the reader, we present a short version of this machinery adapted
for cochain, rather than chain, complexes (this can be done formally by 
considering cochain complexes as chain complexes with negative indices and 
reverting arrows, but we prefer to write down everything explicitly). We follow
closely \cite[Chapter 2]{JW}, with minor simplifications and variations in 
notation.

Let 
$$
\C: \C_0 \overset{\dcobound_0}{\to} \C_1 \overset{\dcobound_1}{\to} \C_2 
\overset{\dcobound_2}{\to} \dots
$$
be a cochain complex of vector spaces over a field $K$ (which is assumed here to
be of arbitrary characteristic; in fact, the whole theory is generalized, with 
slight modifications, to the case of arbitrary complexes of free modules over an 
arbitrary associative ring). 

Let $X_n$ be a basis of the vector space $\C_n$. Write the differentials 
$\dcobound_n: \C_n \to \C_{n+1}$ with respect to these bases:
\[
\dcobound_n(c) = \sum\limits_{c^\prime \in X_{n+1}}[c:c^\prime]\cdot c^\prime ,
\]
where $c \in X_n$, and $[c:c^\prime]$ are coefficients from $K$.

From this data, we construct a directed weighted graph $\Gamma(\C) = (V,E)$. The
set of vertices $V$ of $\Gamma(\C)$ is the basis $V = \bigcup_{n \ge 0} X_n$, 
and the set $E$ of weighted edges consists of triples 
$$
\set{\>(c, c^\prime, [c:c^\prime])}
{c \in X_n, c^\prime \in X_{n+1}, [c:c^\prime] \ne 0\>} .
$$

A finite subset $M \subseteq E$ of the set of edges is called an 
\emph{acyclic matching}, if it satisfies the following two conditions:

\smallskip

(Matching) Each vertex $v \in V$ lies in at most one edge $e \in M$.

(Acyclicity) The subgraph $\Gamma^M(\C) = (V,E^M)$ of the graph $\Gamma(\C)$ has
no directed cycles, where
\[
E^M = (E \setminus M) \cup 
\set{\>(c',c, -\frac{1}{[c:c']})}{(c,c', [c:c']) \in M\>} .
\]

\smallskip

For an acyclic matching $M$ on the graph $\Gamma(\C)$, we introduce the 
following notation:
\begin{itemize}
\item[(1)]
Define
\[
X^M_n = \set{\> c \in X_n}{c \text{ does not lie in any edge } e \in M \>} .
\]

\item[(2)] 
Write $c^\prime \le c$ if $c \in X_n$, $c^\prime \in X_{n+1}$, and 
$[c : c^\prime] \ne 0$.

\item[(3)] 
$\mathrm{Path}(c,c')$ is the set of paths from $c$ to $c'$ in $\Gamma^M(\C)$.

\item[(4)] 
The weight $w(p)$ of a path 
$p = c_1 \to \ldots \to c_r \in \mathrm{Path}(c_1, c_r)$ is defined as
\[
w(c_1 \to \ldots \to c_r) = \prod\limits_{k=1}^{r-1} w(c_k \to c_{k+1})
\]
\[
w(c \to c^\prime) = \begin{cases} 
- \dfrac{1}{[c:c^\prime]} & \text{ if } c \le c^\prime \phantom{.} \\ 
\hskip 9pt [c:c^\prime]   & \text{ if } c^\prime \le c .
\end{cases}
\]
\end{itemize}

The following is a cohomological version of \cite[Theorem 2.2]{JW}:

\begin{theorem}
The cochain complex $(\C, \dcobound)$ is homotopy equivalent to the complex 
$(\C^M, \dcobound^M)$, where $\C_n^M$ is the vector space linearly spanned by 
$X_n^M$, and the differential $\dcobound_n^M: \C^M_n \to \C^M_{n+1}$ is defined
as
\[
\dcobound_n^M(c) = 
\sum\limits_{c^\prime \in X_{n+1}^M} 
\sum\limits_{p \in \mathrm{Path}(c,c^\prime)} w(p) \> c^\prime ,
\]
where $c \in \C^M_n$.
\end{theorem}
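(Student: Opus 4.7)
\smallskip

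The plan is to adapt the proof of the chain-complex version \cite[Theorem 2.2]{JW} to the cochain setting; structurally the argument is the same and only the direction of the arrows changes. I would exhibit explicit linear maps $\pi: \C \to \C^M$ and $\iota: \C^M \to \C$, together with a cochain homotopy $s: \C \to \C$ of degree $-1$, and then verify the four identities
$$
\pi\iota = \mathrm{id}_{\C^M}, \qquad
\dcobound^M \pi = \pi \dcobound, \qquad
\dcobound \iota = \iota \dcobound^M, \qquad
\iota\pi - \mathrm{id}_\C = \dcobound s + s \dcobound .
$$
The first three say that $\pi$ and $\iota$ are chain maps with $\pi$ a retraction of $\iota$, and the fourth provides the homotopy, so together they exhibit $(\C, \dcobound)$ as homotopy equivalent to $(\C^M, \dcobound^M)$.

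All three maps are built from weighted paths in $\Gamma^M(\C)$, using the weight function $w$ introduced in item (4) of the appendix. For $c \in X_n$, one sets
$$
\pi_n(c) = \sum_{c^\prime \in X_n^M} \sum_{p \in \mathrm{Path}(c,c^\prime)} w(p)\, c^\prime ;
$$
for $c \in X_n^M$,
$$
\iota_n(c) = \sum_{c^\prime \in X_n} \sum_{p \in \mathrm{Path}(c,c^\prime)} w(p)\, c^\prime ,
$$
so that the trivial path of $c$ to itself contributes the identity term $c$; and $s_n: \C_n \to \C_{n-1}$ sums $w(p)\,c^\prime$ over all $c^\prime \in X_{n-1}$ and paths that begin with a step along a reversed matched edge. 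The acyclicity hypothesis is used precisely here: it guarantees that $\Gamma^M(\C)$ has no directed cycles, so every $\mathrm{Path}(c,c^\prime)$ is finite and the above formulas genuinely define linear maps.

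The four identities are then checked by a combinatorial decomposition: each relevant set of paths is partitioned according to its first or last edge, and the resulting partial sums are reinterpreted in terms of $\dcobound$, $\dcobound^M$, or the reversed-edge weights. The prefactor $-1/[c:c^\prime]$ attached to a reversed matched edge is engineered precisely so that the ``matched'' and ``unmatched'' contributions along any interior zig-zag segment cancel in pairs, leaving only the endpoints. The homotopy equation $\iota\pi - \mathrm{id}_\C = \dcobound s + s\dcobound$ is the most delicate of the four, but the same partition-by-first-edge argument succeeds once the bookkeeping is set up, together with the observation that the only closed zig-zags from $c$ to itself are suppressed by acyclicity.

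The principal obstacle I expect is not conceptual but notational: one must keep track of the direction of each edge in $\Gamma^M$, the degree of each endpoint, and the placement of the coefficient $-1/[c:c^\prime]$ on reversed edges, in a setting where what would ordinarily be a $\pm$ sign is hidden inside a matrix entry. Once the conventions are pinned down, the argument transcribes essentially verbatim from \cite[Theorem 2.2]{JW}, with the only substantive change being the reversal of arrows appropriate to passing from chain to cochain complexes; in the interest of brevity one could simply cite that reference rather than rederive the combinatorics from scratch.
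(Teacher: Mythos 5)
Your proposal is correct and matches the paper's treatment: the paper offers no independent proof, presenting the statement as the cohomological version of \cite[Theorem 2.2]{JW} obtained formally by reindexing a cochain complex as a chain complex with reversed arrows, which is exactly the citation-plus-dualization route you arrive at. Your sketch of the underlying path-weight construction of $\pi$, $\iota$, and the homotopy $s$ is a faithful outline of the argument in \cite{JW}, so nothing further is needed.
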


\renewcommand{\refname}{Software and online repositories}

\end{document}